\newtheorem{lem}{Lemma}
\newtheorem{thm}{Theorem}
\theoremstyle{definition}
\newtheorem{definition}{Definition}
\title{A New Result Regarding Descartes Numbers}
\author{Pratik Rathore}
\date{\today}
\begin{document}

\maketitle

\section{Introduction}
In the field of number theory, a positive integer $n$ is called a $\textit{perfect number}$ if it satisfies the relation $\sigma(n) = 2n$. Over the years, many even perfect numbers have been found, but it is still not known whether there are any odd perfect numbers. It is then natural to ask whether we can find odd positive integers $n$ that are ``almost perfect." 

For example, consider the number $\mathcal{D} = 3^{2}7^{2}11^{2}13^{2}22021$, also known as the number of Descartes. If one (incorrectly) assumes that $22021 = 19^{2} \cdot 61$ is prime, we find that 

\[\sigma(\mathcal{D}) = \sigma(3^{2}7^{2}11^{2}13^{2}22021) = \sigma(3^{2}7^{2}11^{2}13^{2})(22021+1) = 2\mathcal{D}\]

Inspired by this example, we define a family of numbers, known as Descartes numbers, as follows:

\begin{definition}
An odd positive integer $n$ is called a $\textit{Descartes number}$ if there exist positive integers $k,m$ such that $n = km$ and 

\begin{equation}
\sigma(k)(m+1) = 2km \label{eq:ogdef}
\end{equation}

\end{definition}

Setting $k = 3^{2}7^{2}11^{2}13^{2}$ and $m = 22021$ in the above definition, we see that $\mathcal{D}$ is a Descartes number. In fact, $\mathcal{D}$ is the only known Descartes number.

For simplicity, only cube-free Descartes numbers (defined below) are investigated in this paper.

\begin{definition}
A Descartes number $n$ is called a cube-free Descartes number if $\nexists$ a prime $p$ such that $p^{3} \mid n$.
\end{definition}

In $2008$, Banks et al. $\cite{banks}$ showed that $\mathcal{D}$ is the only cube-free Descartes number with fewer than seven distinct prime divisors. In this paper, we will prove the following theorem:

\begin{thm}
There is no cube-free Descartes number with seven distinct prime factors.
\end{thm}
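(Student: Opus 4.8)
The plan is to analyze the structure of a cube-free Descartes number $n = km$ with exactly seven distinct prime factors by combining the defining relation $\sigma(k)(m+1) = 2km$ with multiplicativity of $\sigma$ and the cube-free restriction. Since $n$ is cube-free, every prime in its factorization appears to the first or second power. I would first establish which primes divide $k$ versus $m$: note that $\gcd(m, m+1) = 1$, and since $\sigma(k)(m+1) = 2km$, the factor $m+1$ on the left must be cancelled by contributions on the right, forcing strong divisibility constraints. In particular, writing $n = p_1^{a_1}\cdots p_7^{a_7}$ with each $a_i \in \{1,2\}$, I would determine that $2$ does not divide $n$ (it is odd), so one of the primes must be $3$, and I would set up the equation $\frac{\sigma(k)}{k} \cdot \frac{m+1}{m} = 2$, treating the left side as a product of local factors.

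Next I would exploit the ``abundancy'' estimate. For each prime power $p^a$ dividing $k$, the factor $\sigma(p^a)/p^a$ is bounded, and the product over all prime-power factors of $k$ times $(m+1)/m$ must equal exactly $2$. Because there are only seven primes, the product $\prod_{i} \frac{p_i}{p_i - 1}$ (an upper bound on $\sigma(k)/k$ as $k$ absorbs more factors) is constrained, which limits the smallest admissible primes: the seven primes cannot all be large, so the smallest few are forced to lie in a short list (e.g. $3,5,7,11,13,\ldots$). I would enumerate the possible small primes dividing $n$ using the requirement that the abundancy product can still reach $2$, pruning aggressively with the inequality $\prod \frac{p_i}{p_i-1} > 2$. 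This reduces the problem to a finite case analysis over the set of prime supports.

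The heart of the argument is then a finite-but-delicate case analysis. For each candidate assignment of the seven primes and each choice of exponents $a_i \in \{1,2\}$ and each partition of the primes between $k$ and $m$, I would impose the exact arithmetic identity $\sigma(k)(m+1) = 2km$ and derive a contradiction, typically by showing that $m+1$ would need a prime factor outside the allowed set, or that $\sigma(k)/k$ cannot be made to match $2m/(m+1)$ exactly with the available integer constraints. A key technical lever is that $\sigma$ is multiplicative and that $\sigma(p^2) = p^2 + p + 1$ introduces new prime factors (e.g. $\sigma(3^2) = 13$, $\sigma(11^2) = 7 \cdot 19$), so tracking the prime factorizations of the $\sigma(p^a)$ terms forces membership conditions on the seven-prime support. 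I expect the main obstacle to be controlling the combinatorial explosion of cases: with seven primes, a binary exponent choice, and a bipartition into $k$ and $m$, a naive search is enormous, so the real work is in finding the right divisibility invariants (looking modulo small primes, and tracking which $\sigma(p^a)$ values are $2$-adically or $3$-adically constrained) to collapse most cases immediately and leave only a handful for explicit computation.
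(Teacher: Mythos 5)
Your overall orientation matches the paper's: both arguments pivot on the identity $\frac{\sigma(k)}{k} = \frac{2m}{m+1}$, use abundancy-style inequalities to force small primes into $k$, and track the prime factorizations of the $\sigma(p^2)$ to generate divisibility constraints (your observation that $\sigma(3^2)=13$ and $\sigma(11^2)=7\cdot 19$ is exactly the engine the paper runs on). But there is a genuine gap: the ``finite-but-delicate case analysis'' is never actually made finite, and you explicitly defer the hard part by saying the real work is in finding the right divisibility invariants. Those invariants are the content of the proof, and they come from structural lemmas you do not establish. The most important is that $k$ must be the square of a squarefree integer with $m \mid \sigma(k)$ and $\gcd(k,m)=1$; this eliminates your ``each choice of exponents $a_i \in \{1,2\}$'' freedom on the $k$-side entirely and is what makes $\sigma(k)=\prod_{p\mid k}\sigma(p^2)$ usable. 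The others are: $m \equiv 1 \pmod{12}$; exactly two primes dividing $k$ are $\equiv 1 \pmod 3$ and $3 \mid k$, which forces $\omega(k)\geq 4$ and reduces the bipartition search to just $\omega(m)\in\{1,2,3\}$; $\sigma(s^2)$ has no prime factor $\equiv 2 \pmod 3$; the lower bound $m \geq 49$ needed to make the abundancy inequalities bite; and the key new lemma that $5 \nmid k$, whose proof is itself a substantial three-case argument.

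Two of your pruning steps are also not justified as stated. First, ``one of the primes must be $3$'' does not follow from the abundancy product alone without already knowing a lower bound on $m$; in the paper it rests on a nontrivial theorem of Banks et al.\ that a Descartes number with $3 \nmid n$ has more than a million distinct prime divisors. Second, the bound $\prod_i \frac{p_i}{p_i-1} > 2$ must be taken over the primes dividing $k$ only (at most six of the seven), since $\frac{m+1}{m}$ contributes essentially nothing; making that product small enough to prune requires first knowing $\omega(k)$, i.e., the mod-$3$ lemma. Finally, the paper's case analysis repeatedly extends arguments of Banks et al.\ that were originally conditional on odd perfect numbers having at least seven prime factors; the extension to seven-prime Descartes numbers requires Nielsen's theorem that odd perfect numbers have at least nine distinct prime factors, which appears nowhere in your plan. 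Without these inputs the case analysis you propose does not terminate in practice.
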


This theorem implies that $\mathcal{D}$ is the only cube-free Descartes number with fewer than eight distinct prime divisors.

\section{Preparations}
Before we begin to prove the theorem below, we present another definition and several lemmas.\\

\begin{definition}
For a prime $p$, let $f(p) = \dfrac{p^{2}}{\sigma(p^{2})}$.
\end{definition}

\begin{lem}
(Nielsen \cite{nielsen}) Every odd perfect number has at least nine distinct prime divisors. 
\end{lem}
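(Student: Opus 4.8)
The plan is to collapse the existential condition in the definition of a Descartes number into a single Diophantine constraint on $k$. From $\sigma(k)(m+1)=2km$, reducing modulo $m$ and using $\gcd(m,m+1)=1$ gives $m\mid\sigma(k)$; rearranging the same identity gives $\sigma(k)=m(2k-\sigma(k))$, hence
\[ m=\frac{\sigma(k)}{2k-\sigma(k)} \quad\text{and}\quad I(k):=\frac{\sigma(k)}{k}=2-\frac{2}{m+1}. \]
So $k$ must be deficient, its deficiency $d:=2k-\sigma(k)$ must be a positive divisor of $\sigma(k)$, and then $m$ is forced to equal $\sigma(k)/d$. Since $n=km$ is odd, $m$ is odd, and $m=1$ would force $\sigma(k)=k$; hence $m\ge 3$, which rearranges to $I(k)\ge 3/2$. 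The theorem thus becomes: no cube-free $k$ with $3/2\le I(k)<2$ and $d\mid\sigma(k)$ yields a cube-free $n=km$ with exactly seven distinct prime factors.

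Next I would bound the prime support of $k$. Writing $I(k)=\prod_{p^{2}\,\|\,k}\frac{1}{f(p)}\prod_{p\,\|\,k}\frac{p+1}{p}$, each factor exceeds $1$ and decreases in $p$, while $\omega(k)\le 7$ because the primes of $k$ are among the seven primes of $n$. Combining the lower bound $I(k)\ge 3/2$ with cube-freeness (each exponent at most $2$) confines the primes of $k$ below an explicit bound and their number to a finite range, so that only finitely many exponent patterns for $k$ remain. The monotonicity of $f$ is the key pruning device: since each factor decreases in $p$, the largest abundancy available from a given number of primes comes from the smallest admissible primes squared, which caps how large the primes of $k$ may grow while still allowing $I(k)\ge 3/2$; any partial product that already reaches $2$ is discarded outright.

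For each surviving pattern I would compute $\sigma(k)$, form $d=2k-\sigma(k)$, and impose the two decisive tests: $d\mid\sigma(k)$, and that $m=\sigma(k)/d$ combines with $k$ into a cube-free integer having exactly seven distinct prime factors. Each candidate should fail one of these: either $d\nmid\sigma(k)$; or some prime $p$ divides both $m$ and $k$ in a way forcing $p^{3}\mid km$, violating cube-freeness; or $\omega(km)\ne 7$. The number $\mathcal D$, with $k=3^{2}7^{2}11^{2}13^{2}$, $d=3^{2}\cdot 7\cdot 13$, and $m=19^{2}\cdot 61$, shows exactly how delicately $d\mid\sigma(k)$ and the cube-free condition interlock, and it is against this template that I would test the seven-prime candidates.

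The one subcase that cannot be settled by divisibility alone is when the pseudoprime $m=\sigma(k)/d$ is itself prime and coprime to $k$: there $\sigma(n)=\sigma(k)\sigma(m)=\sigma(k)(m+1)=2n$, so $n$ is a genuine odd perfect number with seven distinct prime factors, contradicting Nielsen's lemma. This observation removes the ``honest perfect'' branch of every case and reduces the problem to genuinely composite $m$. The step I expect to be hardest is the enumeration itself: making the $f$-based abundancy inequalities sharp enough that the candidate list for $k$ is short, and then tracking cube-freeness across the a priori unknown factorization of $m$, whose primes may or may not already divide $k$.
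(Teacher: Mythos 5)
Your proposal does not address the statement at hand. The statement is Lemma 1 --- Nielsen's theorem that every odd perfect number has at least nine distinct prime divisors. What you have sketched instead is a proof strategy for the paper's main theorem (the nonexistence of cube-free Descartes numbers with seven distinct prime factors): you reformulate the Descartes condition as $m = \sigma(k)/(2k - \sigma(k))$, propose an abundancy-based enumeration of candidate $k$, and dispose of the case where $m$ is prime. That is a reasonable outline for Theorem 1 (and broadly parallel to the paper's actual Cases 1--3 there), but it is not a proof of Lemma 1, and no amount of reasoning about Descartes numbers with seven prime factors can yield a statement about \emph{all} odd perfect numbers.

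Worse, your argument is circular with respect to the statement it is supposed to establish: in your fourth paragraph you explicitly invoke ``Nielsen's lemma'' to rule out the branch where $m$ is prime and $n = km$ is a genuine odd perfect number. One cannot assume the lemma in the course of proving it. For what it is worth, the paper itself offers no proof of Lemma 1 either --- it is cited from Nielsen's 2007 paper in \emph{Mathematics of Computation}, where it is established by a lengthy, partly computer-assisted analysis of abundancy bounds and factorization trees; it is far beyond the scope of the elementary divisibility and $f(p)$-monotonicity arguments used in this paper. The correct ``proof'' here is simply the citation.
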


In \cite{banks}, the authors base many of their proofs on perfect numbers having at least seven distinct prime divisors. Having Lemma $1$ allows us to extend the methods of Banks et al. to the subsequent proofs given in this paper.

The following six lemmas are all proven in \cite{banks}.

\begin{lem}
If $n = km$ is a cube-free Descartes number with $3 \mid n$, $\#\{p : p \mid k$ and $p \equiv 1 \pmod{3}\} = 2$ and $3 \mid k$.
\end{lem}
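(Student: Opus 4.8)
The plan is to convert the Descartes condition into a single divisibility statement and then read off everything from the $3$-adic valuation. Writing $v_3$ for the $3$-adic valuation and recalling $n = km$, equation \eqref{eq:ogdef} reads $2n = \sigma(k)(m+1)$. Since $n$ is odd, cube-free and divisible by $3$, we have $v_3(n) \in \{1,2\}$, and $2$ is a unit at $3$, so the master constraint is
\[ v_3(\sigma(k)) + v_3(m+1) = v_3(n) \le 2. \]
Every count below will be squeezed against this bound.

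First I would compute the local contribution of each prime power $p^{e} \parallel k$ (with $e \in \{1,2\}$ by cube-freeness) to $v_3(\sigma(k)) = \sum_{p^{e} \parallel k} v_3(\sigma(p^{e}))$. A short case check gives: $\sigma(3)=4$ and $\sigma(9)=13$ are prime to $3$; if $p \equiv 2 \pmod 3$ then $\sigma(p^{2}) = p^{2}+p+1 \equiv 1 \pmod 3$ contributes $0$ while $\sigma(p)=p+1$ contributes $v_3(p+1) \ge 1$; and if $p \equiv 1 \pmod 3$ then $\sigma(p)=p+1 \equiv 2 \pmod 3$ contributes $0$, whereas writing $p = 1+3t$ shows $\sigma(p^{2}) = 3(1+3t+3t^{2})$, so $p^{2} \parallel k$ contributes exactly $1$. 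Hence, writing $B$ for the number of primes $p \equiv 1 \pmod 3$ with $p^{2} \parallel k$ and $S$ for the sum of $v_3(p+1)$ over the primes $p \equiv 2 \pmod 3$ dividing $k$ to the first power,
\[ v_3(\sigma(k)) = B + S. \]
In particular the master constraint already forces $B \le 2$ and sharply limits $S$.

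It then remains to (i) locate the factor of $3$ on the $k$-side and (ii) upgrade $B \le 2$ to the exact count $2$ for \emph{all} primes $p \equiv 1 \pmod 3$ dividing $k$. For (i), if $3 \nmid k$ then $3 \mid m$, forcing $v_3(m+1)=0$ and $v_3(\sigma(k)) = v_3(m) \ge 1$; I would rule this out by a residue analysis of $\sigma(k)(m+1)=2km$ modulo $9$, after dividing both sides by the common power of $3$, which is incompatible with $3 \nmid k$ once the admissible residues of $k$, $\sigma(k)$ and $m$ are tabulated. For (ii), note that the valuation sees only the primes $p \equiv 1 \pmod 3$ dividing $k$ to the \emph{second} power, so those dividing $k$ to the first power must be controlled separately; this is the \emph{main obstacle}. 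I expect to dispatch it using the structural input available in the Descartes-number setup, e.g.\ that $k$ is forced to be a perfect square (so no prime divides $k$ to the first power, killing both the first-power primes $\equiv 1 \pmod 3$ and the term $S$) and that $3 \mid k$ then forces $9 \parallel k$, $v_3(m)=0$, and $m \equiv 1 \pmod 3$, whence $v_3(m+1)=0$ and the master constraint collapses to $B=2$. The delicate part throughout is excluding the degenerate valuation distributions, such as $B=1$ together with a single prime $p \equiv 2 \pmod 3$ having $v_3(p+1)=1$, or $B=1$ with $m \equiv 2 \pmod 3$ contributing $v_3(m+1)=1$; ruling these out is where the real work lies and where the auxiliary structural lemmas are indispensable.
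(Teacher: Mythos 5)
First, a point of order: the paper does not actually prove this lemma --- it is one of the six results imported wholesale from Banks et al.\ \cite{banks} --- so there is no in-paper argument to compare you against, only the question of whether your outline would close.

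Your skeleton is the right one: taking $v_3$ of $\sigma(k)(m+1)=2km$ and computing $v_3(\sigma(k))$ locally is exactly how the count of primes $\equiv 1 \pmod 3$ is extracted, and you correctly identify that the structural fact $k=s^2$ with $s$ squarefree (Lemma 6) eliminates the first-power primes and your term $S$, reducing $v_3(\sigma(k))$ to the count $B$. The genuine gap is step (i), ruling out $3\mid m$. A ``residue analysis modulo $9$ after dividing by the common power of $3$'' does not close by itself: writing $m=3^j m'$ and $\sigma(k)=3^j u$ with $3\nmid m'u$, the equation becomes $u(m+1)=2km'$, and modulo $3$ you can pin down $k\equiv 1$ (a square prime to $3$) and $u\equiv 1$ (your own local computation), but $m'$ is a free residue: if $m'\equiv 2\pmod 3$ then both sides are $\equiv 1\pmod 3$ and there is no contradiction. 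What forces $m'\equiv 1\pmod 3$ is the combination of Lemma 6 ($m\mid\sigma(s^2)$) with Lemma 4 ($\sigma(s^2)$ has no prime divisor $\equiv 2\pmod 3$), so that every prime divisor of $m$ other than $3$ is $\equiv 1\pmod 3$; only then does your congruence read $1\equiv 2\pmod 3$ and die. The same unnamed input is needed at the second place you wave: in the surviving case $3\mid k$ you assert $m\equiv 1\pmod 3$ (hence $v_3(m+1)=0$) without justification, and again it is Lemmas 4 and 6 together with $\gcd(k,m)=1$ that deliver it (or, inside this paper's framework, a direct appeal to Lemma 5). So the plan is salvageable and the valuation bookkeeping is correct, but the two steps where the real work lies both hinge on the divisor-of-$\sigma(s^2)$ fact that your proposal never identifies, and without it the argument does not go through.
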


\begin{lem}
Let $\omega(n)$ be the number of distinct prime factors of $n$. If $n = km$ is a cube-free Descartes number with $3 \mid n$, then $\omega(k) \geq 4$.
\end{lem}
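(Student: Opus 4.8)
The plan is to suppose for contradiction that $\omega(k) = 3$ and reduce the claim to a finite, elementary check. By the previous lemma $3 \mid k$ and exactly two prime divisors of $k$ are congruent to $1 \pmod{3}$, so $\omega(k) \geq 3$ and it remains only to exclude $\omega(k) = 3$. Since $3 \not\equiv 1 \pmod{3}$, that case forces $k = 3^{a} p^{b} q^{c}$ with $p \equiv q \equiv 1 \pmod{3}$ and, by cube-freeness, $a, b, c \in \{1, 2\}$. Dividing the defining relation $\sigma(k)(m+1) = 2km$ by $k(m+1)$ gives $\frac{\sigma(k)}{k} = \frac{2m}{m+1}$, so $m = \frac{\sigma(k)}{2k - \sigma(k)}$ is completely determined by $k$ and must be a positive integer; the goal becomes showing that no admissible $k$ yields such an $m$.

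First I would trap $m$ in a finite range. Every local factor $\frac{\sigma(p^{e})}{p^{e}}$ exceeds $1$ and $\frac{\sigma(3^{a})}{3^{a}} \geq \frac{4}{3}$, so $\frac{\sigma(k)}{k} > \frac{4}{3}$; meanwhile the product is largest when $a = b = c = 2$ and $p, q$ are the two smallest admissible primes $7$ and $13$, and a direct computation shows it then stays below $1.82$. Since $x \mapsto \frac{x}{2 - x}$ is increasing on $(0,2)$, these bounds give $3 \leq m \leq 10$, and because $n = km$ is odd so is $m$, leaving $m \in \{3, 5, 7, 9\}$.

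Next I would cut this list down by parity and cube-freeness. Writing $\frac{2m}{m+1} = \frac{m}{(m+1)/2}$ (a reduced fraction, as $m$ is odd) shows the reduced denominator of $\frac{\sigma(k)}{k}$ is $(m+1)/2$; since it must divide the odd number $k$ it is odd, forcing $m \equiv 1 \pmod{4}$ and discarding $m = 3$ and $m = 7$. The value $m = 9 = 3^{2}$ is excluded because together with $3 \mid k$ it would give $3^{3} \mid n$. Only $m = 5$ survives, i.e. $3\sigma(k) = 5k$. Here the case $a = 1$ dies immediately, since $\sigma(3) = 4$ makes the left side even while $5k$ is odd; the case $a = 2$ reduces to $13\,\sigma(p^{b})\sigma(q^{c}) = 15\,p^{b}q^{c}$, which forces $13 \in \{p,q\}$, after which a short two-subcase divisibility chase (the deeper branch pinning $q = 61$ and then failing) completes the contradiction.

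I expect this last $m = 5$, $a = 2$ analysis to be the main obstacle: it is the one place where the clean parity and size arguments run out and one must run the explicit divisibility computation. The delicate point will be organizing that computation so that the denominators, cube-freeness, and the $\equiv 1 \pmod{3}$ restriction together leave only finitely many candidate primes to test, rather than an open-ended search.
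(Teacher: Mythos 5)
Your argument is correct, and it is worth noting that the paper itself gives no proof of this lemma at all --- it is one of the six results quoted verbatim from Banks et al., so you are supplying a proof where the paper only supplies a citation. I checked your steps: Lemma 2 does force $k = 3^{a}p^{b}q^{c}$ with $p,q\equiv 1\pmod 3$ when $\omega(k)=3$; the extremal bound $\frac{\sigma(k)}{k}\le \frac{13}{9}\cdot\frac{57}{49}\cdot\frac{183}{169}\approx 1.8195$ is right (the local factors decrease in the prime and increase in the exponent, and $7,13$ are the least primes $\equiv 1\pmod 3$), giving $3\le m\le 10$; the reduced-denominator observation that $(m+1)/2$ must divide the odd number $k$ correctly kills $m\equiv 3\pmod 4$; $m=9$ dies by cube-freeness since $3\mid k$; and the residual equation $3\sigma(k)=5k$ collapses by parity when $a=1$ and, when $a=2$, reduces to $13\,\sigma(p^{b})\sigma(q^{c})=15\,p^{b}q^{c}$, forcing $p=13$, then $b=2$ by parity, then $61\,\sigma(q^{c})=65\,q^{c}$, then $q=61$, which fails for $c=1$ (parity) and $c=2$ (the factor $97$ of $\sigma(61^{2})=3\cdot 13\cdot 97$ does not divide $65\cdot 61^{2}$). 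The route the surrounding machinery suggests is shorter but less self-contained: invoke Lemma 6 to write $k=(3pq)^{2}$ outright and a lower bound on $m$ (as in Lemma 9) to get $\frac{\sigma(k)}{k}=\frac{2m}{m+1}\ge\frac{2\cdot 49}{50}=1.96$, which already contradicts the $1.8195$ ceiling in one line. Your version trades that single inequality for a finite case analysis, but in exchange it needs neither the squareness of $k$ nor any a priori lower bound on $m$, so it avoids any worry about the logical ordering of the lemmas; the only cost is carrying the extra exponent cases $a,b,c\in\{1,2\}$ that Lemma 6 would have eliminated.
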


\begin{lem}
If $p$ and $q$ are primes such that $p^{2} + p + 1 \equiv 0 \pmod{q}$, then $q = 3$ or $q \equiv 1 \pmod{3}$. If $s$ is square-free, then the number $\sigma(s^{2})$ has no prime divisor $q \equiv 2 \pmod{3}$.
\end{lem}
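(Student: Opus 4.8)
The plan is to establish the two assertions in order, obtaining the second as an immediate corollary of the first. For the first, the key identity is $p^{2} + p + 1 = \frac{p^{3} - 1}{p - 1}$, so the hypothesis $q \mid p^{2} + p + 1$ implies $q \mid p^{3} - 1$, i.e.\ $p^{3} \equiv 1 \pmod{q}$. Before reasoning about orders I would record the degenerate case: if $q = p$ then $p^{2} + p + 1 \equiv 1 \not\equiv 0 \pmod{q}$, a contradiction, so $q \neq p$ and $p$ is a unit modulo $q$.

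Next I would examine the multiplicative order $d$ of $p$ modulo $q$. From $p^{3} \equiv 1 \pmod{q}$ we get $d \mid 3$, so $d \in \{1,3\}$. If $d = 1$ then $p \equiv 1 \pmod{q}$ and hence $p^{2} + p + 1 \equiv 3 \pmod{q}$, which together with the hypothesis forces $q \mid 3$, i.e.\ $q = 3$. If $d = 3$ then Fermat's little theorem gives $d \mid q - 1$, so $3 \mid q - 1$, i.e.\ $q \equiv 1 \pmod{3}$; these two cases exhaust the possibilities and prove the first assertion. Equivalently, one could note that $x^{2} + x + 1$ has discriminant $-3$ and invoke quadratic reciprocity to determine when $-3$ is a square modulo $q$, but the order argument is cleaner and avoids reciprocity.

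For the second assertion, I would write the square-free integer $s$ as a product of distinct primes $s = \prod_{i} p_{i}$ and use multiplicativity of $\sigma$ to get $\sigma(s^{2}) = \prod_{i} \sigma(p_{i}^{2}) = \prod_{i} (p_{i}^{2} + p_{i} + 1)$. Any prime $q$ dividing $\sigma(s^{2})$ must divide some factor $p_{i}^{2} + p_{i} + 1$, so the first assertion applies and gives $q = 3$ or $q \equiv 1 \pmod{3}$; in either case $q \not\equiv 2 \pmod{3}$, which is exactly what is claimed. The argument is elementary throughout, and the only point demanding care is the case split in the first assertion---specifically, ruling out $q = p$ before passing to the order of $p$ and keeping the orders $d = 1$ and $d = 3$ straight---so I anticipate no substantive obstacle beyond this bookkeeping.
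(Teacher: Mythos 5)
Your proof is correct and complete; the order argument (noting $q \neq p$, deducing $p^{3} \equiv 1 \pmod{q}$, and splitting on whether the order of $p$ is $1$ or $3$) together with multiplicativity of $\sigma$ for the second assertion is exactly the standard proof. The paper itself does not reprove this lemma but cites Banks et al., where the same reasoning appears, so there is nothing to add.
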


\begin{lem}
If $n = km$ is a cube-free Descartes number, then $m \equiv 1 \pmod{12}$ and $\gcd(k,m) = 1$.
\end{lem}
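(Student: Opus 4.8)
The plan is to extract everything from the single relation $\sigma(k)(m+1)=2km$ by reading off $2$-adic and then $3$-adic information, the pivotal first step being a structural description of $k$. Since $n=km$ is odd, both $k$ and $m$ are odd, so comparing the power of $2$ on each side gives $v_2(\sigma(k))+v_2(m+1)=1$. Because $m+1$ is even this forces $v_2(m+1)=1$ and $v_2(\sigma(k))=0$: the first equality already yields $m\equiv 1\pmod 4$, and the second says $\sigma(k)$ is odd. I would then note that for $k=\prod p^{a}$ one has $\sigma(p^{a})\equiv a+1\pmod 2$, so $\sigma(k)$ being odd forces every exponent $a$ to be even; cube-freeness caps each exponent at $2$, hence every exponent equals $2$ and $k=s^{2}$ for some squarefree odd $s$. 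This is the key step and essentially the only place where real thought is required: once $k$ is known to be a perfect square, the remaining assertions are short.

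Granting $k=s^{2}$, coprimality is immediate. If a prime $p$ divided both $k$ and $m$, then $v_p(k)=2$ together with $v_p(m)\ge 1$ would give $v_p(n)\ge 3$, contradicting that $n$ is cube-free. Hence $\gcd(k,m)=1$, with no case analysis needed beyond the perfect-square structure.

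For the residue modulo $3$, I would first reduce $\sigma(k)(m+1)=2km$ modulo $m$; since $\gcd(m,m+1)=1$ this gives $m\mid\sigma(k)=\sigma(s^{2})$. By Lemma 4, $\sigma(s^{2})$ has no prime divisor congruent to $2\pmod 3$, so every prime dividing $m$ is either $3$ or congruent to $1\pmod 3$. If $3\nmid m$, then $m$ is a product of primes $\equiv 1\pmod 3$ and therefore $m\equiv 1\pmod 3$. It remains to exclude $3\mid m$: were $3\mid m$, we would have $3\mid n$, and Lemma 2 would then force $3\mid k$, contradicting the coprimality just established. Thus $3\nmid m$ and $m\equiv 1\pmod 3$. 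Combining $m\equiv 1\pmod 4$ with $m\equiv 1\pmod 3$ via the Chinese Remainder Theorem yields $m\equiv 1\pmod{12}$, completing the proof. The main obstacle here is conceptual rather than computational: recognizing that the $2$-adic valuation forces $k$ to be a perfect square, a fact which simultaneously trivializes the coprimality claim and allows Lemma 4 to control the prime factorization of $m$.
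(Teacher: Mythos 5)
Your argument is judged on its own terms here, since the paper never proves this lemma itself --- it is one of the six results quoted from \cite{banks}. Most of what you do is sound and is in fact the standard route: the $2$-adic count in $\sigma(k)(m+1)=2km$ forces $v_2(m+1)=1$ and $\sigma(k)$ odd, hence $m\equiv 1\pmod 4$ and all exponents in $k$ even; cube-freeness pins every exponent at $2$, so $k=s^2$ with $s$ squarefree; coprimality then follows at once; reducing the defining equation modulo $m$ gives $m\mid\sigma(s^2)$; and Lemma 4 confines the prime divisors of $m$ to $3$ and primes $\equiv 1\pmod 3$. The flaw is the single step where you rule out $3\mid m$ by citing Lemma 2. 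Once you know $\gcd(k,m)=1$ and $3\mid n$, the assertions ``$3\mid k$'' and ``$3\nmid m$'' are the \emph{same} statement, so Lemma 2 is not an independent tool at this point: any proof of Lemma 2 must already contain a proof that $3\nmid m$, which is exactly what you are trying to establish. The appeal is therefore circular in substance, and also in fact: in \cite{banks} the $m\equiv 1\pmod{12}$ result is proved first and is then used to derive the statement quoted here as Lemma 2 (one needs $3\nmid m+1$, i.e.\ $m\equiv 1\pmod 3$, to run the $3$-adic count that yields ``exactly two primes $\equiv 1\pmod 3$ divide $k$''). So your proof has no independent content at the one place where content is required.

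The gap closes with a short mod-$3$ computation that stays inside your framework. Suppose $3\mid m$; by your coprimality step $3\nmid s$, and $3\nmid(m+1)$. Comparing $3$-adic valuations in $\sigma(s^2)(m+1)=2s^2m$ gives $v_3(m)=v_3(\sigma(s^2))=:A$. Write $\sigma(s^2)=3^A u$ and $m=3^A m'$, so $3\nmid u$ and $3\nmid m'$. Every prime divisor of $u$, and every prime divisor of $m'$ (since $m'\mid m\mid\sigma(s^2)$), divides $\sigma(s^2)$ and is not $3$, hence is $\equiv 1\pmod 3$ by Lemma 4; therefore $u\equiv m'\equiv 1\pmod 3$. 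Cancelling $3^A$ from the equation and reducing modulo $3$, the left side is $u(m+1)\equiv 1\cdot 1=1\pmod 3$ while the right side is $2s^2m'\equiv 2\cdot 1\cdot 1=2\pmod 3$, a contradiction. Hence $3\nmid m$, so $m\equiv 1\pmod 3$ by your divisor argument, and your Chinese Remainder Theorem step then gives $m\equiv 1\pmod{12}$ exactly as written.
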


\begin{lem}
If $n = km$ is a Descartes number, then $k = s^{2}$ for some positive squarefree integer $s$ and $m \mid \sigma(s^{2})$.
\end{lem}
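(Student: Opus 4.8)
The plan is to read off the multiplicative structure of $k$ directly from the defining equation $\sigma(k)(m+1) = 2km$ by means of a $2$-adic valuation argument, and then to extract the divisibility $m \mid \sigma(s^{2})$ by reducing that same equation modulo $m$. First I would record that, since $n = km$ is odd, both $k$ and $m$ are odd; consequently $km$ is odd, so the right-hand side $2km$ has $2$-adic valuation exactly $1$. On the left-hand side, $m+1$ is even, so its valuation is at least $1$, while $\sigma(k)$ contributes a nonnegative valuation. Matching the $2$-adic valuations of the two sides of $\sigma(k)(m+1) = 2km$ therefore forces $\sigma(k)$ to be odd (and, incidentally, pins down $m+1 \equiv 2 \pmod 4$, i.e. $m \equiv 1 \pmod 4$, consistent with Lemma 5).

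Next I would convert the oddness of $\sigma(k)$ into exponent data. Writing $k = \prod_i p_i^{a_i}$ with each $p_i$ an odd prime, multiplicativity gives $\sigma(k) = \prod_i \sigma(p_i^{a_i})$, and for an odd prime $p$ the quantity $\sigma(p^{a}) = 1 + p + \cdots + p^{a}$ is a sum of $a+1$ odd terms, hence odd precisely when $a$ is even. Thus $\sigma(k)$ odd forces every exponent $a_i$ to be even, so $k$ is a perfect square. Invoking the cube-free hypothesis under which the paper operates, each $a_i$ is moreover at most $2$, so in fact $a_i = 2$ for every $i$; setting $s = \prod_i p_i$, which is squarefree, yields $k = s^{2}$.

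Finally, for the divisibility I would reduce the defining relation modulo $m$. Since $m+1 \equiv 1 \pmod m$ while $2km \equiv 0 \pmod m$, the identity $\sigma(k)(m+1) = 2km$ collapses to $\sigma(k) \equiv 0 \pmod m$, giving $m \mid \sigma(k) = \sigma(s^{2})$ directly, with no coprimality input needed.

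I expect the only delicate point to be the squarefree — as opposed to merely square — conclusion. The $2$-adic valuation argument by itself yields only that $k$ is a perfect square, so the step that genuinely pins each exponent down to exactly $2$, and thereby makes $s$ squarefree, is precisely where cube-freeness must enter; without that hypothesis one cannot rule out a prime occurring to the fourth (or higher even) power. The rest of the argument, including the valuation bookkeeping and the modular reduction, is routine.
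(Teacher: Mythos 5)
Your proof is correct, and it is essentially the canonical argument: the paper itself gives no proof of this lemma (it is one of the six quoted from Banks et al.), and your route --- matching $2$-adic valuations in $\sigma(k)(m+1) = 2km$ to force $\sigma(k)$ odd, deducing that every exponent in $k$ is even since $\sigma(p^{a})$ is odd exactly when $a$ is even for odd $p$, using cube-freeness to pin each exponent at $2$, and reducing the defining equation modulo $m$ to get $m \mid \sigma(s^{2})$ --- is precisely the argument in that reference. One worthwhile observation you make: as stated, the lemma omits the cube-free hypothesis, and you are right that it is indispensable for the squarefree (rather than merely square) conclusion; this is harmless only because the paper restricts attention to cube-free Descartes numbers throughout, so the lemma should properly read ``cube-free Descartes number.''
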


\begin{lem}
If $n = km$ is a Descartes number with $3 \nmid n$, then $n$ has more than one million distinct prime divisors.
\end{lem}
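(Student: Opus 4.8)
The plan is to show that the hypothesis $3 \nmid n$ collapses the Descartes condition into an extremely rigid equation, and then to extract the prime-factor count from that rigidity. Write $n = km$ as in (\ref{eq:ogdef}); by Lemma 6 we may take $k = s^{2}$ with $s$ squarefree and $m \mid \sigma(s^{2})$, and $3 \nmid n$ gives $3 \nmid s$ and $3 \nmid m$. The first step is to prove $3 \nmid \sigma(s^{2})$: reducing (\ref{eq:ogdef}) modulo $3$ and using $s^{2} \equiv 1 \pmod 3$ gives $\sigma(s^{2})(m+1) \equiv 2m \pmod 3$, so if $3 \mid \sigma(s^{2})$ the left side vanishes, forcing $3 \mid m$, a contradiction. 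Consequently, by Lemma 4 every prime divisor of $\sigma(s^{2})$ is $\equiv 1 \pmod 3$, while every prime divisor of $s$ must be $\equiv 2 \pmod 3$ (a prime $p \equiv 1 \pmod 3$ dividing $s$ would make $\sigma(p^{2}) = p^{2}+p+1 \equiv 0 \pmod 3$, hence $3 \mid \sigma(s^{2})$). In particular $\gcd(s^{2}, \sigma(s^{2})) = 1$ and $\gcd(s, m) = 1$.

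Next I would pin down $m$ exactly. Rewriting (\ref{eq:ogdef}) as $\sigma(s^{2})(m+1) = 2 s^{2} m$ and using $\gcd(s^{2}, \sigma(s^{2})) = 1$ together with the fact that $\sigma(s^{2})$ is odd, one reads off $\sigma(s^{2}) \mid m$; combined with $m \mid \sigma(s^{2})$ from Lemma 6 this forces $m = \sigma(s^{2})$, and then (\ref{eq:ogdef}) collapses to $\sigma(s^{2}) = 2s^{2} - 1$. Thus $k = s^{2}$ is an odd almost perfect number larger than $1$, all of whose prime factors are $\equiv 2 \pmod 3$, and $n = s^{2}(2s^{2} - 1)$. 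This reduction is the heart of the matter: the problem of counting the prime factors of $n$ becomes a problem about a single, highly constrained number $s^{2}$.

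It then suffices to bound $\omega(s)$ from below, since $\gcd(s, 2s^{2}-1) = 1$ yields $\omega(n) = \omega(s) + \omega(2s^{2}-1) \ge \omega(s)$. The relation $\sigma(s^{2}) = 2s^{2} - 1$ means the abundancy $\sigma(s^{2})/s^{2} = 2 - 1/s^{2}$ sits just below $2$, so the product $\prod_{p \mid s}(1 + 1/p + 1/p^{2})$ must be driven essentially up to $2$; because every such prime is $\equiv 2 \pmod 3$ and hence at least $5$, this product grows only like a Mertens-type partial product over a single residue class, and reaching a value near $2$ already requires many prime factors. Sharpening this crude abundancy estimate into the stated bound $\omega(n) > 10^{6}$ is where the real work lies: I would combine the exact local constraints imposed by $\sigma(s^{2}) = 2s^{2} - 1$ (the factors $\sigma(p^{2}) = p^{2}+p+1$ must assemble into a number that is precisely $2s^{2}-1$) with the lower-bound machinery for the number of prime divisors of perfect-type numbers — the same circle of ideas in which Lemma 1 is used to extend the arguments of Banks et al.

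I expect the quantitative count to be the main obstacle. The reduction to the almost perfect equation is forced and short, and a soft abundancy argument already gives a first, modest lower bound on $\omega(s)$; but driving that bound past a million — against the possibility of fine-tuning the abundancy to exactly $2 - 1/s^{2}$ using many moderately large primes $\equiv 2 \pmod 3$ — requires the detailed factor-chain and congruence analysis, supported by computation, that constitutes the technical core. Everything before that point is a direct deduction from Lemmas 4 and 6 and the defining relation (\ref{eq:ogdef}).
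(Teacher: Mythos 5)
Your proposal cannot be compared line-by-line with the paper, because the paper does not prove this lemma at all: it is one of the six results quoted verbatim from Banks et al.\ \cite{banks}. Measured against what a complete proof requires, your structural reduction is correct and is in fact the standard first half of the cited argument: reducing \eqref{eq:ogdef} modulo $3$ gives $3 \nmid \sigma(s^{2})$; Lemma 4 then forces every prime divisor of $\sigma(s^{2})$ into the class $1 \pmod{3}$ and every prime divisor of $s$ into the class $2 \pmod{3}$; coprimality of $\sigma(s^{2})$ with $2s^{2}$ together with $m \mid \sigma(s^{2})$ (Lemma 6) pins down $m = \sigma(s^{2})$, whence $\sigma(s^{2}) = 2s^{2}-1$ and $\prod_{p \mid s}\bigl(1 + 1/p + 1/p^{2}\bigr) = 2 - 1/s^{2}$ with every $p \equiv 2 \pmod{3}$, $p \geq 5$. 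All of these steps check out.

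The genuine gap is that the number one million --- which is the entire content of the lemma --- is never produced. Concretely, what remains to be proved is: writing $q_{1} = 5 < q_{2} = 11 < \cdots$ for the odd primes $\equiv 2 \pmod{3}$, one must establish $P := \prod_{i \leq 10^{6}}\bigl(1 + 1/q_{i} + 1/q_{i}^{2}\bigr) < 2$. Granting this, the argument closes: if $\omega(s) \leq 10^{6}$, then by monotonicity $2 - 1/s^{2} \leq P < 2$, so $s^{2} \leq (2-P)^{-1}$, an explicit finite bound under which the admissible $s$ are checked directly to violate $\sigma(s^{2}) = 2s^{2}-1$; hence $\omega(n) > \omega(s) > 10^{6}$. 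The inequality $P < 2$ is a serious computation --- it involves the first million primes in that progression (of size up to roughly $3 \times 10^{7}$), or explicit Mertens-type estimates with error terms for the progression --- and the threshold ``one million'' is calibrated exactly to it, since the partial products do eventually exceed $2$. Your substitute for this step, an appeal to ``lower-bound machinery for the number of prime divisors of perfect-type numbers'' in ``the same circle of ideas'' as Lemma 1, is a misdirection: Nielsen's theorem concerns odd perfect numbers and contributes nothing here, and the soft abundancy estimate you do give yields a lower bound of only a few dozen prime factors, not $10^{6}$. So what you have is a correct reduction followed by an unproven assertion precisely where the lemma's quantitative content lies.
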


Since we are only working with $n$ that have seven distinct prime factors, Lemma $7$ guarantees that $3 \mid n$, and will assume this throughout the rest of the paper.

We will also need the following two results to prove the new theorem.

\begin{lem}
$f(p) < 1$ for all primes $p$.
\end{lem}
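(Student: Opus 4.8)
The plan is to evaluate the denominator $\sigma(p^2)$ in closed form and compare it directly against the numerator $p^2$. Since $p$ is prime, the only positive divisors of $p^2$ are $1$, $p$, and $p^2$, so that $\sigma(p^2) = 1 + p + p^2$. Substituting this into the definition of $f$ yields
\[
f(p) = \frac{p^2}{1 + p + p^2}.
\]
With the expression written this way, the inequality should fall out immediately from an elementary estimate on the denominator.

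The key observation is that $1 + p + p^2 > p^2$ for every prime $p$, since $1 + p > 0$. Thus the denominator strictly exceeds the numerator while both quantities are positive, and dividing gives $f(p) < 1$. I would note that this argument is uniform in $p$ and requires no case distinction: it applies equally to $p = 2$ and to every odd prime, and in fact holds for any positive integer $p$, not merely primes.

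Honestly, I expect no genuine obstacle in this lemma; the entire content reduces to the strict positivity of $1 + p$, and the proof is a single line once $\sigma(p^2)$ is expanded. The real interest of the statement is downstream, where $f(p) < 1$ presumably supplies a bounded multiplicative factor that can be aggregated across the prime divisors of $k = s^2$ to control products such as $\prod_{p \mid s} f(p)$ in the later analysis of cube-free Descartes numbers.
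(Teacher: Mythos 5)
Your proof is correct and is essentially identical to the paper's: both expand $\sigma(p^2) = p^2 + p + 1$ using the fact that the divisors of $p^2$ are $1$, $p$, and $p^2$, and conclude from $p^2 < p^2 + p + 1$. No further comment is needed.
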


\begin{proof}
For all primes $p$, it is clear that $p^2 < p^2 + p + 1 = \sigma(p^{2})$. The result follows.
\end{proof}

\begin{lem}
If $n = km$ is a cube-free Descartes number with fewer than nine distinct prime factors, then $m \geq 49$.
\end{lem}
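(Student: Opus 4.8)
The plan is to leverage the congruence restriction on $m$ together with the multiplicative structure of $\sigma$. By Lemma $5$, $m \equiv 1 \pmod{12}$, so if $m < 49$ then $m \in \{1, 13, 25, 37\}$, and it suffices to eliminate these four values. The case $m = 1$ is immediate: equation~\eqref{eq:ogdef} becomes $2\sigma(k) = 2k$, i.e. $\sigma(k) = k$, which is impossible since $3 \mid k$ forces $k > 1$ and hence $\sigma(k) > k$. The case $m = 25$ is nearly as quick: Lemma $6$ gives $m \mid \sigma(s^2) = \sigma(k)$, so $5 \mid \sigma(k)$, but $5 \equiv 2 \pmod 3$ contradicts Lemma $4$.

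The substance lies in the two prime cases $m = 13$ and $m = 37$. For each I would first rewrite \eqref{eq:ogdef}, using $k = s^2 = \prod_{p \mid k} p^2$ from Lemma $6$, as $\prod_{p \mid k} f(p) = k/\sigma(k) = (m+1)/(2m)$; this equals $7/13$ when $m = 13$ and $19/37$ when $m = 37$. Dividing \eqref{eq:ogdef} by $2$ also pins down one prime divisor of $k$ at once: for $m = 13$ it reads $7\sigma(k) = 13k$, forcing $7 \mid k$, and for $m = 37$ it reads $19\sigma(k) = 37k$, forcing $19 \mid k$ (both $\equiv 1 \pmod 3$). The engine is then to divide the product $\prod_{p \mid k} f(p)$ by the factors $f(p)$ for the primes already known to divide $k$, one at a time. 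At each stage the surviving product equals a reduced fraction $a/b$; cross-multiplying $\prod_p p^2 / \prod_p \sigma(p^2) = a/b$ with $\gcd(a,b) = 1$ forces every prime factor of the numerator $a$ to divide one of the remaining $p$, hence to divide $k$.

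Carrying this out, for $m = 13$ dividing by $f(3)$ and $f(7)$ leaves $19/21$, forcing $19 \mid k$; dividing further by $f(19)$ leaves $127/133$, forcing $127 \mid k$. But $127 \equiv 1 \pmod 3$, while by Lemma $2$ the primes $7$ and $19$ are already the only two divisors of $k$ that are $\equiv 1 \pmod 3$ --- a contradiction. For $m = 37$, dividing by $f(3)$ and $f(19)$ leaves $1651/2109 = (13 \cdot 127)/(3 \cdot 19 \cdot 37)$, forcing both $13 \mid k$ and $127 \mid k$; together with $19$ these are three divisors $\equiv 1 \pmod 3$, again contradicting Lemma $2$. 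The main obstacle is the exact bookkeeping of these reduced fractions and the verification that each newly forced prime ($19$, then $127$; and $13, 127$) is $\equiv 1 \pmod 3$, so that it collides with the rigid count of residue-$1$ prime divisors from Lemma $2$. The only genuinely subtle point is the two-step iteration required for $m = 13$, where $19$ must be forced before $127$; once the fractions are in hand, each case closes by a residue count modulo $3$.
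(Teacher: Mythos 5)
Your proposal is correct, and it disposes of $m=1$ and $m=25$ exactly as the paper does, but it handles the two prime candidates $m=13$ and $m=37$ by a genuinely different route. The paper's observation is that if $m$ is prime then $\gcd(k,m)=1$ (Lemma 5) gives $\sigma(n)=\sigma(k)\sigma(m)=\sigma(k)(m+1)=2km=2n$, so $n$ would be an odd perfect number, and Nielsen's theorem (Lemma 1) then forces at least nine distinct prime divisors --- contradicting the hypothesis. This kills every prime value of $m$ in one line, which is why the paper never needs to look at $13$ and $37$ individually. You instead run the $f(p)$-bookkeeping: from $7\sigma(k)=13k$ and $19\sigma(k)=37k$ you successively force $7,19,127\mid k$ (resp.\ $13,127\mid k$ alongside $19$), and the three resulting prime divisors of $k$ congruent to $1\pmod 3$ collide with the exact count of two in Lemma 2. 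I checked your fractions ($19/21$, then $127/133$; and $1651/2109=(13\cdot 127)/(3\cdot 19\cdot 37)$) and the residues of $7,13,19,127$ modulo $3$; they are all right, and the cross-multiplication step legitimately forces each numerator prime into $k$ since the reduced fractions are never $1$. The trade-off is clear: the paper's argument is shorter, uses the ``fewer than nine prime factors'' hypothesis in an essential and uniform way, and would survive if the bound $49$ were raised past further primes $\equiv 1\pmod{12}$; yours is longer and case-specific but entirely elementary, leaning only on Lemma 2 (and Lemma 7 for $3\mid n$) rather than on Nielsen's deep computational result. Either proof is acceptable; the perfect-number observation is the one worth remembering, since it is reused implicitly elsewhere in this literature.
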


\begin{proof}
Using Lemmas $1$ and $5$, we observe that $m \equiv 1 \pmod{12}$ cannot be prime. If $m = 1$, we use the definition of a Descartes number and find that $\sigma(k) = k$, contradiction. If $m = 25$, we apply Lemmas $6$ and $4$ to reach a contradiction. Thus $m \geq 49$.
\end{proof}

\section{The Key Lemma}

\begin{lem}
If a cube-free Descartes number $n = km$ has exactly $7$ distinct prime factors, then $5 \nmid k$.
\end{lem}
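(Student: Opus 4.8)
The plan is to control the single multiplicative quantity $\prod_{p \mid k} f(p)$ and trap it in a narrow interval. By Lemma~6 we may write $k = s^2$, so that $\frac{\sigma(k)}{k} = \prod_{p \mid k} \frac{\sigma(p^2)}{p^2} = \prod_{p \mid k} f(p)^{-1}$, and the Descartes relation \eqref{eq:ogdef} rearranges to $\frac{\sigma(k)}{k} = \frac{2m}{m+1}$, i.e. $\prod_{p \mid k} f(p) = \frac{m+1}{2m}$. Since $m \geq 49$ by Lemma~9, the right-hand side lies in $\left(\frac12, \frac{25}{49}\right]$, so $\frac12 < \prod_{p \mid k} f(p) \leq \frac{25}{49}$. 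I would also record the identity $m = \frac{\sigma(k)}{2k - \sigma(k)}$, obtained by solving \eqref{eq:ogdef} for $m$; this shows that once the prime support of $k$ is fixed, $m$ is completely determined, so the whole argument reduces to producing a finite list of admissible $k$ and ruling out each one.

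Suppose for contradiction that $5 \mid k$. Since $3 \mid k$ by Lemma~2 and $k$ is a cube-free perfect square, both $3$ and $5$ occur to the second power and contribute the fixed factor $f(3)f(5) = \frac{9}{13}\cdot\frac{25}{31} = \frac{225}{403}$. First I would show $7 \nmid k$: otherwise $\prod_{p \mid k} f(p) \leq f(3)f(5)f(7) = \frac{225}{403}\cdot\frac{49}{57} < \frac12$, contradicting the lower bound. By Lemma~2 exactly two prime divisors of $k$ are $\equiv 1 \pmod 3$, and combining Lemma~3 with $\gcd(k,m)=1$ (Lemma~5) and $\omega(m) \geq 1$ gives $\omega(k) \in \{4,5,6\}$. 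Dividing the window by $f(3)f(5)$ confines the product of the remaining $f(p)$ to $\left(\frac{403}{450}, \frac{403}{441}\right]$. Because each $f(p) < 1$ and the two primes $\equiv 1 \pmod 3$ are now at least $13$, this interval bounds the free primes both from below (the product cannot fall too far below $1$) and from above (it cannot be pushed too close to $1$). Carrying this out in each of the three cases $\omega(k) = 4,5,6$ leaves only finitely many candidate prime supports for $k$.

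Finally, for each surviving candidate I would compute $m = \frac{\sigma(k)}{2k - \sigma(k)}$ and test it against the necessary conditions: $m$ must be a positive integer, coprime to $k$, congruent to $1 \pmod{12}$ (Lemma~5), cube-free with exactly $7 - \omega(k)$ prime factors, and at least $49$ (Lemma~9). In practice most candidates die at once because $2k - \sigma(k) \nmid \sigma(k)$, so that $m$ is not even an integer; the survivors then fail a congruence or prime-count condition. I expect the main obstacle to be bookkeeping rather than any single sharp inequality: one must verify that the enumeration of admissible supports is genuinely complete in each case, so that no candidate prime-set is overlooked, and then dispatch each case by an exact computation of $\sigma$. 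The cases with larger $\omega(k)$ admit more free primes, but they are correspondingly restricted by the shrinking prime count of $m$ — in particular, when $\omega(k) = 6$ the value of $m$ must be the square of a single prime, which is extremely rigid.
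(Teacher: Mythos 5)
Your setup is correct and partly attractive: the identity $\prod_{p \mid k} f(p) = \frac{m+1}{2m} \in \bigl(\tfrac12, \tfrac{25}{49}\bigr]$ is right, the observation that $5 \mid k$ forces $7 \nmid k$ (since $f(3)f(5)f(7) = \tfrac{11025}{22971} < \tfrac12$) is correct and is cleaner than anything in the paper, and the reduction of the remaining primes to the window $\bigl(\tfrac{403}{450}, \tfrac{403}{441}\bigr]$ is arithmetically sound. The case $\omega(k)=4$ genuinely closes under your scheme, because both free primes are $\equiv 1 \pmod 3$ and at least $13$, and $f(13) = \tfrac{169}{183} \approx 0.9235$ already exceeds the upper endpoint $\tfrac{403}{441} \approx 0.9138$, which forces the larger prime to be bounded.

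The gap is your central claim that the interval trap ``leaves only finitely many candidate prime supports'' in the cases $\omega(k) = 5, 6$. It does not: your window has width about $0.018$, and once a partial product of $f$-values lands strictly inside it, the remaining prime can be arbitrarily large, since $f(r) \to 1$. Concretely, for $\omega(k)=5$ take the support $\{3,5,11,67,r\}$ with $r \equiv 1 \pmod 3$: one computes $f(11)f(67) = \tfrac{121}{133}\cdot\tfrac{4489}{4557} \approx 0.8962$, which already lies in $\bigl(\tfrac{403}{450}, \tfrac{403}{441}\bigr]$, so \emph{every} prime $r \equiv 1 \pmod 3$ with $r \gtrsim 1393$ keeps the triple product in the window. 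Your enumeration is therefore infinite, and ``compute $m = \sigma(k)/(2k-\sigma(k))$ for each survivor'' cannot be carried out. This is exactly why the paper does not rely on $m \geq 49$ alone: it uses the factorizations $\sigma(3^2) = 13$ and $\sigma(5^2) = 31$ together with Lemma 2 to force specific primes ($13$, $31$) into $m$ or $k$, which yields lower bounds like $m \geq 949$ or $m \geq 2449$ and hence windows of width on the order of $10^{-3}$ whose upper endpoints sit \emph{below} $f(p)$ for every admissible $p$ --- that is the mechanism that actually bounds the largest prime. To repair your argument you would need to import those divisibility constraints before enumerating, at which point you are essentially reconstructing the paper's case analysis.
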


\begin{proof}
On the contrary, suppose $5 \mid k$. Since $n$ has $7$ distinct prime factors, and $\gcd(k,m) = 1$ (by Lemma $5$), we have $\omega(k) + \omega(m) = 7$. By Lemma $3$, we have three distinct cases to consider:\\

\textbf{Case 1: $\omega(m) = 1, \omega(k) = 6$}\\
In their paper, Banks et al. show that if $\omega(m) = 1, \omega(k) = 4,5$, then there is no such $k$ such that $5 \mid k$. Using Lemma $1$, we can easily extend their proof and conclude that this result also holds true for $\omega(m) = 1, \omega(k) = 6,7$.

\textbf{Case 2: $\omega(m) = 2, \omega(k) = 5$}\\
In their paper, Banks et al. show that if $\omega(m) = 2, \omega(k) = 4$ and we assume that $5 \mid k$, then either $13 \mid m$ and $31 \mid k$ or $31 \mid m$ and $13 \mid k$. Using Lemma $1$, we can easily extend their proof and conclude that this result also holds true for $\omega(m) = 2, \omega(k) = 5,6$.

If $13 \mid m$, then $m \equiv 949 \pmod{3900}$, meaning that $m \geq 949$. In addition, we have that $31 \mid k$, implying that $k = 3^{2}5^{2}31^{2}p^{2}q^{2}$. WLOG assume $p < q$.

We obtain the inequality 

\[\frac{2 \cdot 949}{950} \leq \frac{2m}{m+1} = \frac{\sigma(k)}{k} = \frac{\sigma(3^{2}5^{2}31^{2})}{3^{2}5^{2}31^{2}}\frac{1}{f(p)}\frac{1}{f(q)} < 2 \].

Upon further calculation, it can be seen that the possible values for the ordered pair $(p,q)$ are 

\[\{(17, 59), (17, 61), (19, 43), (23, 31)\}\]

However, by Lemma $2$, we can throw out $(17, 59)$ and $(19, 43)$. We also throw out $(23, 31)$ since $k$ must be cube-free. The remaining pair, $(17, 61)$ does not yield an integer solution for $m$.  

If $31 \mid m$, then $m \equiv 2449 \pmod{3900}$, meaning that $m \geq 2449$. In addition, we have that $13 \mid k$, implying that $k = 3^{2}5^{2}13^{2}p^{2}q^{2}$. WLOG assume $p < q$.

We obtain the inequality 

\[\frac{2 \cdot 2449}{2450} \leq \frac{2m}{m+1} = \frac{\sigma(k)}{k} = \frac{\sigma(3^{2}5^{2}13^{2})}{3^{2}5^{2}13^{2}}\frac{1}{f(p)}\frac{1}{f(q)} < 2 \]

Upon further calculation, it can be seen that the possible values for the ordered pair $(p,q)$ are 

\[\{(37, 307), (37, 311), (37, 313), (37, 317), (37, 331), (37, 337), (41, 167),\] 
\[(41, 173), (47, 109), (47, 113), (53, 89), (61, 71), (61, 73)\}\]

By Lemma $2$, we can throw out all pairs except for $(37, 311), (37, 317), (47, 109), (61, 71)$. None of these pairs yield an integer solution for $m$.

\textbf{Case 3: $\omega(m) = 3, \omega(k) = 4$}

Since $\omega(k) = 4$, $k = 3^{2}5^{2}p^{2}q^{2}$, with $p \neq q$ and $p,q \equiv 1 \pmod{3}$. WLOG let $p < q$.

When this expression for $k$ is plugged into $\eqref{eq:ogdef}$, we see that

\begin{equation}
13 \cdot 31 \sigma(p^{2}) \sigma(q^{2}) (m+1) = 2 \cdot 3^{2}5^{2}p^{2}q^{2}m \label{eq:lem1.1}    
\end{equation}

We will first show $(13 \cdot 31) \mid m$.

Suppose $13 \mid p$, meaning that $p = 13$. By substituting this value into $\eqref{eq:lem1.1}$, we find that 

\begin{equation}
31 \cdot 61 \sigma(q^{2}) (m+1) = 2 \cdot 3 \cdot 5^{2} \cdot 13 \cdot q^{2} \cdot m \label{eq:lem1.2}    
\end{equation}

Rearranging $\eqref{eq:lem1.2}$, we obtain the following equation:

\[\frac{2m}{m+1} = \frac{31 \cdot 61}{3 \cdot 5^{2} \cdot 13 \cdot f(q)}\]

By Lemma $9$, $m \geq 49$, so we see that

\[ \frac{2 \cdot 49}{50} \leq \frac{31 \cdot 61}{3 \cdot 5^{2} \cdot 13 \cdot f(q)} < 2\]

The only solutions for $q$ that are primes and $\equiv 1 \pmod{3}$ are $q = 37, 43, 61, 67, 73, 79$. None of these values of $q$ give an integer solution for $m$. Thus $13 \nmid p$.

If we assume $31 \mid p$, we use the same method to arrive at the inequality

\[\frac{2 \cdot 49}{50} \leq \frac{13 \cdot 331}{3 \cdot 5^{2} \cdot 31 \cdot f(q)} < 2\]

However, this inequality gives no solutions for $q$ that satisfy the required conditions. Thus $31 \nmid p$.

Since $p$ and $q$ are arbitrary, we have also shown that neither $13$ nor $31$ divides $q$. Therefore, $(13 \cdot 31) \mid m$, meaning that $m = 13^{a}31^{b}t$, where $1 \leq a,b \leq 2$ and $t \geq 7$ is a prime or a square of a prime.

Using the expression for $m$ in $\eqref{eq:ogdef}$, we find that

\[ 13 \cdot 31 \sigma(p^{2}) \sigma(q^{2}) (13^{a} \cdot 31^{b} \cdot t + 1) = 2 \cdot 3^{2}5^{2}p^{2}q^{2} \cdot 13^{a} \cdot 31^{b} \cdot t\]

\[\sigma(p^{2}) \sigma(q^{2}) (13^{a} \cdot 31^{b} \cdot t + 1) = 2 \cdot 3^{2}5^{2}p^{2}q^{2} \cdot 13^{a-1} \cdot 31^{b-1} \cdot t\]

\begin{equation}
\frac{13^{a} \cdot 31^{b} \cdot t + 1}{2 \cdot 3^{2}5^{2} \cdot 13^{a-1} \cdot 31^{b-1} \cdot t} = f(p)f(q) \label{eq:lem1.3}    
\end{equation}

We observe that 

\[\frac{403}{450} \leq \frac{13^{a} \cdot 31^{b} \cdot t + 1}{2 \cdot 3^{2}5^{2} \cdot 13^{a-1} \cdot 31^{b-1} \cdot t} \leq \frac{13^{1} \cdot 31^{1} \cdot 7 + 1}{2 \cdot 3^{2}5^{2} \cdot 13^{0} \cdot 31^{0} \cdot 7} = \frac{2822}{3150}\]

and so using $\eqref{eq:lem1.3}$

\[\frac{403}{450} \leq f(p)f(q) \leq \frac{2822}{3150}\]

Since $p \equiv 1 \pmod{3}$ and $n$ is cube-free, the two smallest values for $p$ are $7$ and $19$. 

Since $f(p)$ increases as $p$ increases, and $f(19) > \sqrt{\dfrac{2822}{3150}}$, we must have $p = 7$. However, $f(7) < \dfrac{403}{450}$. Combined with the fact that $f(q)$ is never greater than $1$ (by Lemma $8$), we find that there is no corresponding value of $q$ satisfying the inequality.

Since we have shown that $5 \nmid k$ in all $3$ cases, we have proven the desired statement.

\end{proof}

\section{Proof of  Theorem}

\begin{proof}
By Lemma $3$, we have three distinct cases to consider:

\textbf{Case 1: $\omega(m) = 1, \omega(k) = 6$}

We see that at least one of $7$ or $11$ must divide $k$, otherwise

\[ \frac{2m}{m+1} = \frac{\sigma(k)}{k} \leq \frac{\sigma(3^{2}13^{2}17^{2}19^{2}23^{2}29^{2})}{3^{2}13^{2}17^{2}19^{2}23^{2}29^{2}}\]

is impossible for $m \geq 49$. 

If $7 \mid k$, we have that $k = 3^{2}7^{2}l^{2}$, where $l$ is square-free. Then by $\eqref{eq:ogdef}$ we have

\[13 \cdot 19\sigma(l^{2})(m+1) = 2 \cdot 3 \cdot 7^{2} l^{2} m\] 
By Lemma $2$, we have that at least one of $13$ or $19$ must divide $m$, meaning that $m$ is either $13^{2}$ or $19^{2}$. If $m = 13^{2}$, then $5 \mid (m+1)$, meaning that $5 \mid k$, but this is impossible by Lemma $10$. If $m = 19^{2}$, then we have $13 \mid l \implies 13 \mid k$ and $181 \mid (m+1) \implies 181 \mid k$, which contradicts Lemma $2$. Thus $7 \nmid k$.

If $11 \mid k$, we have that $k = 3^{2}{11}^{2}l^{2}$, where $l$ is square-free. Then by $\eqref{eq:ogdef}$ we have

\[13 \cdot 7 \cdot 19 \sigma(l^{2}) (m+1) = 2 \cdot 3^{2} \cdot 11^{2} \cdot l^{2} \cdot m\]

By Lemma $2$, we see that not all of $7$, $13$, and $19$ cannot divide $l$, therefore $m$ must be one of $7^{2}$, $13^{2}$, or $19^{2}$. In each case, $11^{2} \nmid (m+1)$. Therefore, $11 \mid \sigma(l^{2})$, but this is impossible by Lemma $4$.

Thus, it is impossible for a cube-free Descartes number with $7$ distinct prime factors to have $\omega(m) = 1$.\\

\textbf{Case 2: $\omega(m) = 2, \omega(k) = 5$}

We see that $7 \mid k$, otherwise

\[ \frac{2m}{m+1} = \frac{\sigma(k)}{k} \leq \frac{\sigma(3^{2}11^{2}13^{2}17^{2}19^{2})}{3^{2}11^{2}13^{2}17^{2}19^{2}}\]

is impossible for $m \geq 49$.

Then by $\eqref{eq:ogdef}$ we have

\begin{equation}
13 \cdot 19\sigma(l^{2})(m+1) = 2 \cdot 3 \cdot 7^{2} l^{2} m \label{eq:case2key1} 
\end{equation}

By Lemma $2$, we see that $(13 \cdot 19) \nmid l$, so at least one of $13$ or $19$ divides $m$.

Now suppose that $(13 \cdot 19) \mid m$. Using Lemma $5$, the fact that $m$ is cube-free, and $\omega(m) = 2$, we see that $m = 13 \cdot 19^{2}$ or $m = 13^{2} \cdot 19^{2}$. 

If $m = 13 \cdot 19^{2}$, we substitute into $\eqref{eq:case2key1}$ to obtain

\begin{equation}
2347\sigma(l^{2}) = 3 \cdot 7^{2} 19 l^{2} \label{eq:case2-1.1}
\end{equation}

Then it must be true that $l = 2347pq$ for distinct primes $p,q$. Substituting this expression into $\eqref{eq:case2-1.1}$, we find that

\begin{equation}
397 \cdot 661 \sigma(p^{2})\sigma(q^{2}) = 7 \cdot 19 \cdot 2347p^{2}q^{2} \label{eq:case2-1.2}
\end{equation}

Since $397$ and $661$ are prime, $\eqref{eq:case2-1.2}$ implies that $p,q$ are $397, 661$ in some order. However, these values for $p$ and $q$ do not satisfy the equation above. 

If $m = 13^{2} \cdot 19^{2}$, we substitute into $\eqref{eq:case2key1}$ to obtain

\begin{equation}
(13^{2} \cdot 19^{2} + 1)\sigma(l^{2}) = 2 \cdot 3 \cdot 7^{2} \cdot 13 \cdot 19 l^{2} \label{eq:case2-1.3}
\end{equation}

Since $5 \mid (13^{2} \cdot 19^{2} + 1)$, we have by $\eqref{eq:case2-1.3}$ that $5 \mid l \implies 5 \mid k$, which contradicts Lemma $10$.

Therefore, $(13 \cdot 19) \nmid m$.

Now suppose that $19 \mid l$ and $13 \mid m$, meaning that $l = 19pq$ for primes $p,q$; note that $p,q \equiv 2 \pmod{3}$ by Lemma $2$. Then by $\eqref{eq:case2key1}$ we have

\begin{equation}
13 \cdot 127 \sigma(p^{2})\sigma(q^{2})(m + 1) = 2 \cdot 7^{2} 19 p^{2}q^{2}m \label{eq:case2key2}
\end{equation}

Since we have $p,q \equiv 2 \pmod{3}$, it must be true that $(13 \cdot 127) \mid m$. Since $m$ is cube-free, $\omega(m) = 2$, and $m \equiv 1 \pmod{12}$, we see that $m = 13 \cdot 127^{2}$ or $m = 13^{2}127^{2}$.

If $m = 13 \cdot 127^{2}$, $\eqref{eq:case2key2}$ becomes

\begin{equation}
17 \cdot 881 \sigma(p^{2})\sigma(q^{2}) = 7 \cdot 19 \cdot 127 p^{2}q^{2} \label{eq:case2-2.1}
\end{equation}

Since $17$ and $881$ are prime, we have that $p,q$ are $17, 881$ in some order. However, these values for $p$ and $q$ do not satisfy $\eqref{eq:case2-2.1}$.

Then we assume that $m = 13^{2}127^{2}$, and by plugging this value into $\eqref{eq:case2key2}$ we obtain

\begin{equation}
397 \cdot 3433 \sigma(p^{2})\sigma(q^{2}) = 7^{2} \cdot 13 \cdot 19 \cdot 127 p^{2}q^{2} \label{eq:case2-2.2}
\end{equation}

By the same method as the previous case, we conclude that there are no $p,q$ satisfying $\eqref{eq:case2-2.2}$.

We must now consider the final case, $13 \mid l$ and $19 \mid m$. Since $13 \mid l$, we have that $l = 13pq$ for some primes $p,q$, with $p,q \equiv 2 \pmod{3}$ by Lemma $2$. 

By substituting this expression for $l$ into $\eqref{eq:case2key1}$, we find that

\begin{equation}
19 \cdot 61 \sigma(p^{2})\sigma(q^{2})(m + 1) = 2 \cdot 7^{2} 13 p^{2}q^{2}m \label{eq:case2-3.1}   
\end{equation}

Since $p, q \equiv 2 \pmod{3}$,  we must have that $(19 \cdot 61) \mid m$. Since $m$ is cube-free, $\omega(m) = 2$, and $m \equiv 1 \pmod{12}$, we see that $m = 19^{2} \cdot 61$ or $m = 19^{2}61^{2}$.

If $m = 19^{2}61^{2}$, by $\eqref{eq:case2-3.1}$ we have that

\begin{equation}
337 \cdot 1993 \sigma(p^{2})\sigma(q^{2}) = 7^{2} \cdot 13 \cdot 19 \cdot 61 p^{2}q^{2} \label{eq:case2-3.2}    
\end{equation}

Then $p,q = 337, 1993$ in some order. One can verify that these values for $p, q$ do not satisfy $\eqref{eq:case2-3.2}$.

If $m = 19^{2} \cdot 61$, by $\eqref{eq:case2-3.1}$ we have that

\begin{equation}
11^{2} \sigma(p^{2})\sigma(q^{2}) = 7 \cdot 19 p^{2}q^{2} \label{eq:case2-3.3}    
\end{equation}

Clearly $11$ divides at least one of $p$ or $q$. $11$ cannot divide both $p$ and $q$, otherwise $k$ would not be cube-free. WLOG let $11 \mid p$, meaning that $p = 11$. Substituting into $\eqref{eq:case2-3.3}$, we find that

\[q^{2} + q + 1 = q^{2}\]

which is impossible. 

Thus, it is impossible for a cube-free Descartes number with 7 distinct prime factors to have $\omega(m) = 2$.

\textbf{Case 3: $\omega(m) = 3, \omega(k) = 4$}

We can again show that $7 \mid k$. Then by $\eqref{eq:ogdef}$ we have

\begin{equation}
13 \cdot 19\sigma(l^{2})(m+1) = 2 \cdot 3 \cdot 7^{2} l^{2} m \label{eq:case3key1}
\end{equation}

where $l = pq$ for distinct primes $p,q$. 

$\eqref{eq:case3key1}$ then becomes 

\begin{equation}
13 \cdot 19\sigma(p^{2})\sigma(q^{2})(m+1) = 2 \cdot 3 \cdot 7^{2} p^{2} q^{2} m \label{eq:case3key2}
\end{equation}

Applying Lemma $2$ we find that there are $3$ distinct cases: $13$ divides one of $p$ or $q$, $19$ divides one of $p$ or $q$, or neither $13$ nor $19$ divide $p$ or $q$. 

If $13$ divides one of $p$ or $q$, WLOG let $p = 13$. Substituting into $\eqref{eq:case3key2}$, we obtain

\begin{equation}
19 \cdot 61 \sigma(q^{2})(m+1) = 2 \cdot 7^{2} \cdot 13 \cdot q^{2}m \label{eq:case3-3.1}    
\end{equation}

We see that $(19 \cdot 61) \mid m \implies m \geq 1159$.

Then \[\frac{2 \cdot 1159}{1160} \leq \frac{2m}{m+1} = \frac{19 \cdot 61 \sigma(q^{2})}{7^{2} \cdot 13 \cdot q^{2}} < 2\]

We find that $q = 11$ is the only prime satisfying this inequality. Using $\eqref{eq:case3-3.1}$, we find $m = 19^{2} \cdot 61$, but then $w(m) = 2$, which contradicts our original assumption.\footnote{$m = 19^{2} \cdot 61$ gives the number of Descartes.}

Now suppose $19$ divides one of $p$ or $q$. WLOG let $p = 19$. Substituting into $\eqref{eq:case3key2}$, we obtain

\[ 13 \cdot 127 \sigma(q^{2})(m+1) = 2 \cdot 7^{2} \cdot 19 \cdot q^{2}m \]

We see that $(13 \cdot 127) \mid m \implies m \geq 1651$.

Then \[\frac{2 \cdot 1651}{1652} \leq \frac{2m}{m+1} = \frac{13 \cdot 127 \sigma(q^{2})}{7^{2} \cdot 19 \cdot q^{2}} < 2\]

but this inequality has no prime integer solutions.

We must now consider the case where neither $13$ nor $19$ divide $p$ or $q$. In this case, we see that both $13$ and $19$ must divide $m$, implying that $m = 13^{a}19^{b}t$, where $1 \leq a,b \leq 2$ and $t \geq 5$ is a prime or a square of a prime.

Substituting into $\eqref{eq:case3key2}$ and rearranging, we see that

\[ 13 \cdot 19 \sigma(p^{2}) \sigma(q^{2}) (13^{a} \cdot 19^{b} \cdot t + 1) = 2 \cdot 3 \cdot 7^{2}p^{2}q^{2} \cdot 13^{a} \cdot 19^{b} \cdot t\]

\[\sigma(p^{2}) \sigma(q^{2}) (13^{a} \cdot 19^{b} \cdot t + 1) = 2 \cdot 3 \cdot 7^{2}p^{2}q^{2} \cdot 13^{a-1} \cdot 19^{b-1} \cdot t\]

\begin{equation}
\frac{13^{a} \cdot 19^{b} \cdot t + 1}{2 \cdot 3 \cdot 7^{2} \cdot 13^{a-1} \cdot 19^{b-1} \cdot t} = f(p)f(q) \label{eq:case3-3.2}    
\end{equation}

We also observe that

\[\frac{247}{294} \leq \frac{13^{a} \cdot 19^{b} \cdot t + 1}{2 \cdot 3 \cdot 7^{2} \cdot 13^{a-1} \cdot 19^{b-1} \cdot t} \leq \frac{13 \cdot 19 \cdot 5 + 1}{2 \cdot 3 \cdot 7^{2} \cdot 5} = \frac{206}{245}\]

and so using $\eqref{eq:case3-3.2}$

\[\frac{247}{294} \leq f(p)f(q) \leq  \frac{206}{245}\]

Since $3 \mid k$, $p,q \geq 5$. WLOG let $p < q$. Since $f(17) > \sqrt{\dfrac{206}{245}}$ and $f(p)$ is increasing, we see that $p \in \{ 5, 7, 11\}$.

If $p = 5$, we have $f(5) < \dfrac{247}{294}$ and $f(q) < 1$, meaning that $f(5)f(q) < \dfrac{247}{294}$, so this is not possible.

If $p = 7$, we have 

\[ \frac{247}{294} \cdot \frac{1}{f(7)} \leq \frac{q^{2}}{\sigma(q^{2})} \leq \frac{206}{245} \cdot \frac{1}{f(7)}\]

It turns out that this inequality has no working values for $q$. For $p = 11$, we repeat the argument used for $p = 7$ to conclude that $p \neq 11$.

Since all three values for $p$ do not work, this case is also impossible.

Therefore, there is no cube-free Descartes number with seven distinct prime factors.

\end{proof}

Using Theorem $1$ and the previous results proven by Banks et al., we conclude that $\mathcal{D}$ is the only cube-free Descartes number with fewer than eight distinct prime factors.

\iffalse
\section{Improving the Bound}

\url{https://primes.utm.edu/lists/small/millions/} 

\href{http://www.wolframalpha.com/input/?i=(1+\%2B+1\%2F15485867+\%2B+1\%2F(15485867)\%5E2)\%5Ex+\%3C+400\%2F399}{Work in Wolfram}

The bound can be improved from $1000000$ to $1038763$, if not even more.
\fi

\section{Acknowledgements}
The author would like to thank Dr. Larry Washington for his mentorship throughout the project.

\end{document}